\numberwithin{equation}{section}
\newtheorem{theorem}{Theorem}[section]
\newtheorem{conj}[theorem]{Conjecture}
\newtheorem{problem}[theorem]{Problem}
\newtheorem{remark}[theorem]{Remark}
\title{Decomposition of cubic graphs related to Wegner's conjecture}
\author{J\'anos Bar\'at\thanks{Supported by Sz\'echenyi 2020 under the
    EFOP-3.6.1-16-2016-00015 and 
OTKA-ARRS Slovenian-Hungarian Joint Research Project, grant no. NN-114614.}\\
\small University of Pannonia, Department of Mathematics\\[-0.8ex]
\small 8200 Veszpr\'em, Egyetem utca 10., Hungary\\[-0.8ex]
\small and\\
\small  MTA-ELTE Geometric and Algebraic Combinatorics Research Group\\[-0.8ex]
\small H--1117 Budapest, P\'azm\'any P.\ s\'et\'any 1/C, Hungary\\[-0.8ex]
\small \texttt{barat@cs.elte.hu}}
\begin{document}

\maketitle

\begin{abstract}
Thomassen formulated the following conjecture: 
Every $3$-connected cubic graph has a red-blue vertex coloring such that
the blue subgraph has maximum degree $1$
(that is, it consists of a matching and some isolated vertices) and the red
subgraph has minimum degree at least $1$ and contains no $3$-edge path.
We prove the conjecture for Generalized Petersen graphs.

We indicate that a coloring with the same properties might exist for any subcubic graph.
We confirm this statement for all subcubic trees.
\end{abstract}

\section{Preliminaries}

We use standard graph theory terminology.
For any undefined terms, consult the excellent book by Jensen and Toft \cite{JTbook}.
The {\em square chromatic number} of a graph $G$ is simply the chromatic number of the square of $G$.
In notation, $\chi_{\square}(G)=\chi(G^2)$.
A graph is {\it cubic} if it is 3-regular.
A graph is {\it subcubic} if it has maximum degree 3.

Wegner \cite{weg} initiated the study of the square chromatic number of planar graphs.
There has been accelerated interest in this topic due to his conjecture.
We recall the case $\Delta\le 3$.

\begin{conj}[Wegner \cite{weg}]
 For any subcubic planar graph $G$, the square of $G$ is $7$-colorable. That is, $\chi_{\square}(G)\le 7$.
\end{conj}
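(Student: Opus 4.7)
The plan is to use the standard minimum-counterexample plus discharging framework that has proved fruitful for many planar coloring problems. Let $G$ be a subcubic planar graph with $\chi_{\square}(G)\ge 8$ whose order is as small as possible; the goal is to derive a contradiction. Note first that $G^2$ has maximum degree at most $\Delta(\Delta-1)+\Delta = 9$, so Brooks' theorem already yields $\chi_{\square}(G)\le 9$ without using planarity at all. The task is therefore to exploit planarity to shave off two further colors.

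First I would build a catalogue of \emph{reducible configurations}: small subgraphs whose presence in $G$ would allow us to delete a vertex $v$, color $G\setminus v$ with $7$ colors by minimality, and then extend the coloring because strictly fewer than $7$ colors appear in the second neighborhood of $v$. Natural candidates are vertices of degree at most $2$, adjacent pairs of $2$-vertices, triangles incident to a $2$-vertex, and configurations surrounding short faces whose second neighborhoods can be pre-colored so that few colors remain forbidden. Each candidate requires a careful local argument, and often an identification-and-recoloring trick is needed when the naive count of forbidden colors is tight.

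Second, I would apply discharging. Assign to each vertex $v$ the initial charge $d(v)-4$ and to each face $f$ the charge $|f|-4$, so that by Euler's formula the total charge is $-8$. One then designs rules that move charge from large faces and $3$-vertices to small faces and low-degree vertices, aiming to prove that every vertex and face ends with non-negative charge unless $G$ contains one of the reducible configurations. The main obstacle is exactly this discharging design: for $\Delta=3$ the second neighborhood of a vertex can contain up to nine further vertices in many non-isomorphic arrangements, and handling all combinations of $3$-, $4$- and $5$-faces incident to a cubic vertex simultaneously is the delicate core of the problem. This is precisely why Wegner's conjecture in the cubic case has resisted a complete proof for decades, and why every partial result (for instance under girth or connectivity hypotheses) has been accompanied by a lengthy and intricate case analysis.
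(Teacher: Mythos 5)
You have written a research programme, not a proof. The only step that is actually verifiable in your outline is the easy one: a vertex of degree at most $2$ has at most six other vertices in its second neighborhood, so it is reducible and a minimal counterexample must be $3$-regular. Beyond that, nothing is established: you exhibit no verified reducible configuration involving a cubic vertex, you write down no discharging rules, and you close by conceding that the delicate core --- a cubic vertex can have up to nine vertices in its second neighborhood, so planarity must be used to recover three colors, and the interaction of $3$-, $4$- and $5$-faces around such a vertex is exactly where all the difficulty lies --- is the part you have not done. As it stands the argument proves only the Brooks-type bound $\chi_{\square}(G)\le 9$ that you note at the outset; the passage from $9$ to $7$ is asserted as a plan, not carried out.

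For calibration: the paper itself contains no proof of this statement either. It is quoted as Wegner's conjecture, with the remark that Thomassen has recently proved it in \cite{ct17}; that proof is indeed a long and intricate argument broadly of the discharging-plus-reducibility flavor you describe, which is consistent with your (correct) assessment that the case analysis is the real obstacle. The paper's own contribution takes a different route entirely: it studies Thomassen's red-blue decomposition conjecture (Conjecture~\ref{r&b}), whose truth for $3$-connected cubic planar graphs would give a short, structural derivation of the $7$-colorability of the square, and it verifies that decomposition only for Generalized Petersen graphs and subcubic trees --- which does not suffice to conclude Wegner's conjecture. So there is no in-paper proof to compare yours against; measured on its own terms, your attempt is an honest but incomplete sketch of the known hard route, with the essential content still missing.
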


Recently, Thomassen published his proof of Wegner's conjecture, see \cite{ct17}.
He formulated an attractive conjecture, which would imply Wegner's. 
This new conjecture belongs to another well-studied area of graph theory: graph decompositions.
It is well-known, that subcubic graphs can be 2-colored such that each color class induces only a matching and isolated vertices.
To see this, one distributes the vertices arbitrarily into a red and a blue class. 
If inside any class, there is a vertex of induced degree at least 2, then swap the color of that vertex.
At first sight, the next conjecture is very similar, but excludes the possibility of isolated vertices in the red class.
Instead it relaxes the red part from a matching to a subgraph not containing a $3$-edge path. 

\begin{conj}[Thomassen] \label{r&b} 
 If $G$ is a $3$-connected, cubic graph on at least $8$ vertices, then the vertices of $G$ can
be colored blue and red such that the blue subgraph has maximum degree $1$
(that is, it consists of a matching and some isolated vertices) and the red
subgraph has minimum degree at least $1$ and contains no $4$-path.
\end{conj}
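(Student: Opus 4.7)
\medskip
\noindent\textbf{Proof approach.}
Since $G$ is a $3$-connected cubic graph, it is bridgeless, and by Petersen's theorem it admits a perfect matching $M$; the complement $F = G - M$ is then a $2$-factor, i.e.\ a disjoint union of cycles $C_1,\dots,C_k$. My plan is to fix a periodic vertex coloring along each cycle of $F$ and then verify that the matching edges in $M$ do not create a red $P_4$ or force a blue vertex to have two blue neighbors.

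A natural first attempt is to repeat the pattern $RRB$ (red-red-blue) along each cycle whose length is divisible by $3$. Inside such a cycle this produces red components equal to single edges $P_2$ together with isolated blue vertices, and crucially every red vertex is an \emph{endpoint} of its $P_2$, not an internal vertex of a longer red path. Hence any $M$-edge incident to a red vertex can enlarge the local red component by at most one edge, turning a $P_2$ into at worst a $P_3$, which is still allowed. An $M$-edge between two blue vertices contributes to the blue matching, which is also harmless because each blue vertex is incident to at most one edge of $M$. Cycles of length $3\ell+r$ with $r \in \{1,2\}$ would be handled by replacing a short block of the pattern with a local correction (for example, the block $RRBRR$ on five consecutive vertices, giving one red $P_3$ and one blue vertex), while triangles in $F$ can simply be colored entirely red since $K_3$ contains no $P_4$.

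The real difficulty lies in the global interaction through $M$. The danger is a chain of $M$-edges all joining red vertices: if two $P_2$'s are fused through an $M$-edge into a red $P_4$, or if a red $P_3$ is extended further, the coloring fails. To prevent this one must coordinate the \emph{phase} of the $RRB$ pattern across different $F$-cycles so that $M$-edges between cycles tend to land on blue-blue or red-blue pairs. Whether such a phase choice always exists for an arbitrary $3$-connected cubic graph is the heart of the conjecture, and I expect this to be the main obstacle. For highly structured families such as the Generalized Petersen graphs the $2$-factor and the matching are explicit, making a case analysis tractable; for the full conjecture one likely needs either a delicate discharging argument, an application of the Lov\'asz Local Lemma after randomizing the phases, or a reduction via a carefully chosen reducible configuration, keeping in mind that the usual cubic reductions can destroy $3$-connectivity and must therefore be combined with a local extension argument.
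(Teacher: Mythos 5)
This statement is an open conjecture, and the paper does not prove it either: it only establishes the special cases of Generalized Petersen graphs (via explicit building blocks --- horizontal cut, red wedge, blue cross, red syringe --- assembled inductively along the outer cycle) and a variant for subcubic trees. Your proposal is therefore not a proof but a programme, and you say so yourself: whether the phases of the $RRB$ pattern can be coordinated across the cycles of the $2$-factor is exactly the part you leave open, and it is the whole difficulty. Beyond that, there is a concrete error in the part you do argue. You claim that inside an $RRB$-colored cycle every red component is a $P_2$ whose vertices are endpoints, so an $M$-edge incident to a red vertex ``can enlarge the local red component by at most one edge, turning a $P_2$ into at worst a $P_3$.'' This is false on two counts: a red $P_2$ has \emph{two} vertices, each carrying its own matching edge, so the component can gain two pendant edges; and, more importantly, the other endpoint of a red--red matching edge is not an isolated red vertex but typically lies in another red $P_2$, so a \emph{single} such matching edge already fuses two $P_2$'s into a path with three edges on four vertices --- precisely the forbidden $4$-path. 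So the within-cycle analysis is not ``safe modulo coordination''; without coordination it fails immediately.

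Two further points show the scheme cannot work as stated without invoking the hypotheses you never use. First, the rule ``color triangles of the $2$-factor entirely red'' breaks on the $3$-prism (two triangles joined by a perfect matching of rungs): coloring both triangles red makes the whole graph red, and coloring one blue violates the blue maximum-degree condition; the $3$-prism is in fact the known counterexample that forces the ``at least $8$ vertices'' hypothesis, so any correct argument must distinguish it, and yours does not. Second, a red triangle with even one red matching partner already contains a $4$-path, so triangles impose rigid constraints on the matching partners that your local-correction blocks do not address. If you want to pursue this line, the honest formulation is: reduce the conjecture to a statement about choosing phases on the cycles of a $2$-factor so that no matching edge joins two red vertices that both have red cycle-neighbors; proving that such phases exist for every $3$-connected cubic graph on at least $8$ vertices is an open problem, not a routine verification.
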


Here a $4$-path is a path with $3$ edges and 4 vertices.

\begin{remark}
 The original conjecture was formulated for all $3$-connected cubic graphs.
 The reviewer of this note observed that the $3$-prism does not have the required red-blue coloring. 
 Therefore, it has to be excluded with an extra assumption.
\end{remark}

Thomassen gave a short and elegant argument that shows how Conjecture~\ref{r&b} implies Wegner's conjecture.
In this note, we confirm Conjecture~\ref{r&b} for Generalized Petersen graphs.
In the context of the square chromatic number of subcubic graphs, 
the Petersen graph plays a crucial role in the following sense. 
We know that  $\chi_{\square}=10$ for the Petersen graph.
Cranston and Kim~\cite{cra} proved that a dramatic drop happens in the chromatic number if we exclude only this one graph:
$\chi_{\square}(G)\le 8$ for any subcubic graph $G$ different from the Petersen graph.
However, Conjecture~\ref{r&b} holds also for the Petersen graph, as shown in Figure~\ref{petersen}.

\begin{figure}[h]
\minipage{0.49\textwidth}
  \includegraphics[width=\linewidth]{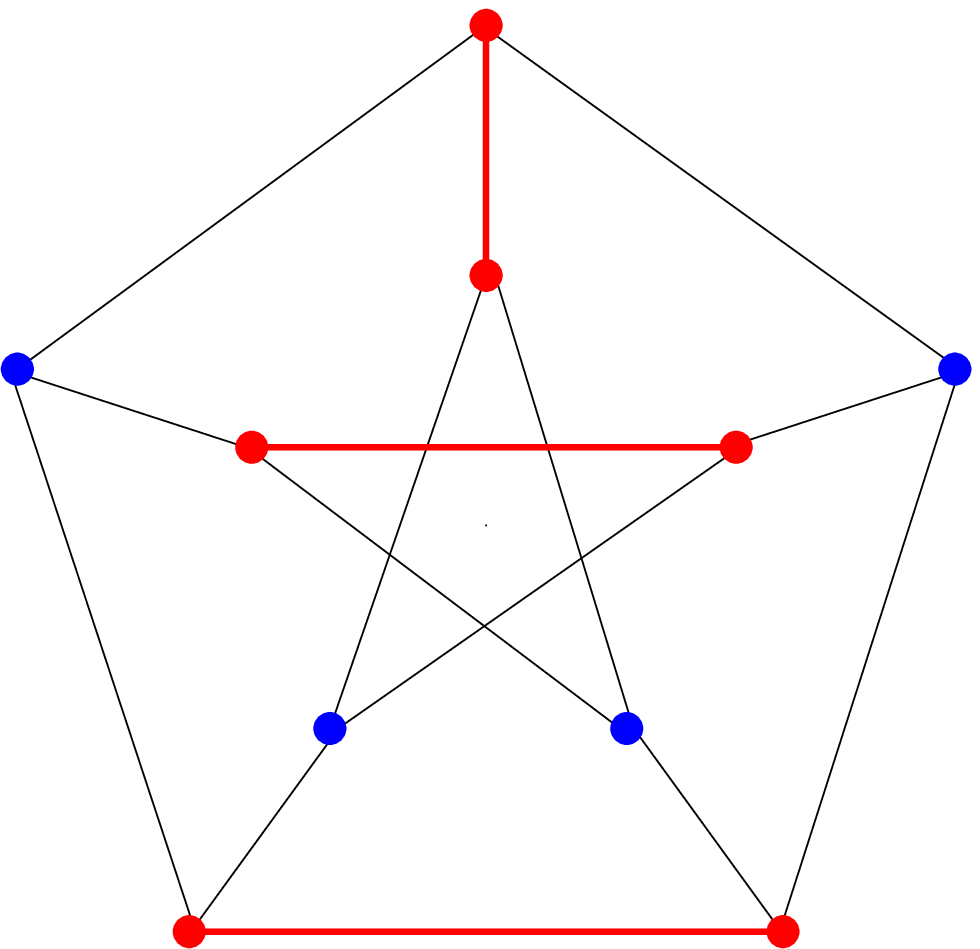}
  \caption{A red-blue coloring of the Petersen graph satisfying Conjecture~\ref{r&b}.}
\label{petersen}
\endminipage\hfill
\minipage{0.49\textwidth}
\includegraphics[width=\linewidth]{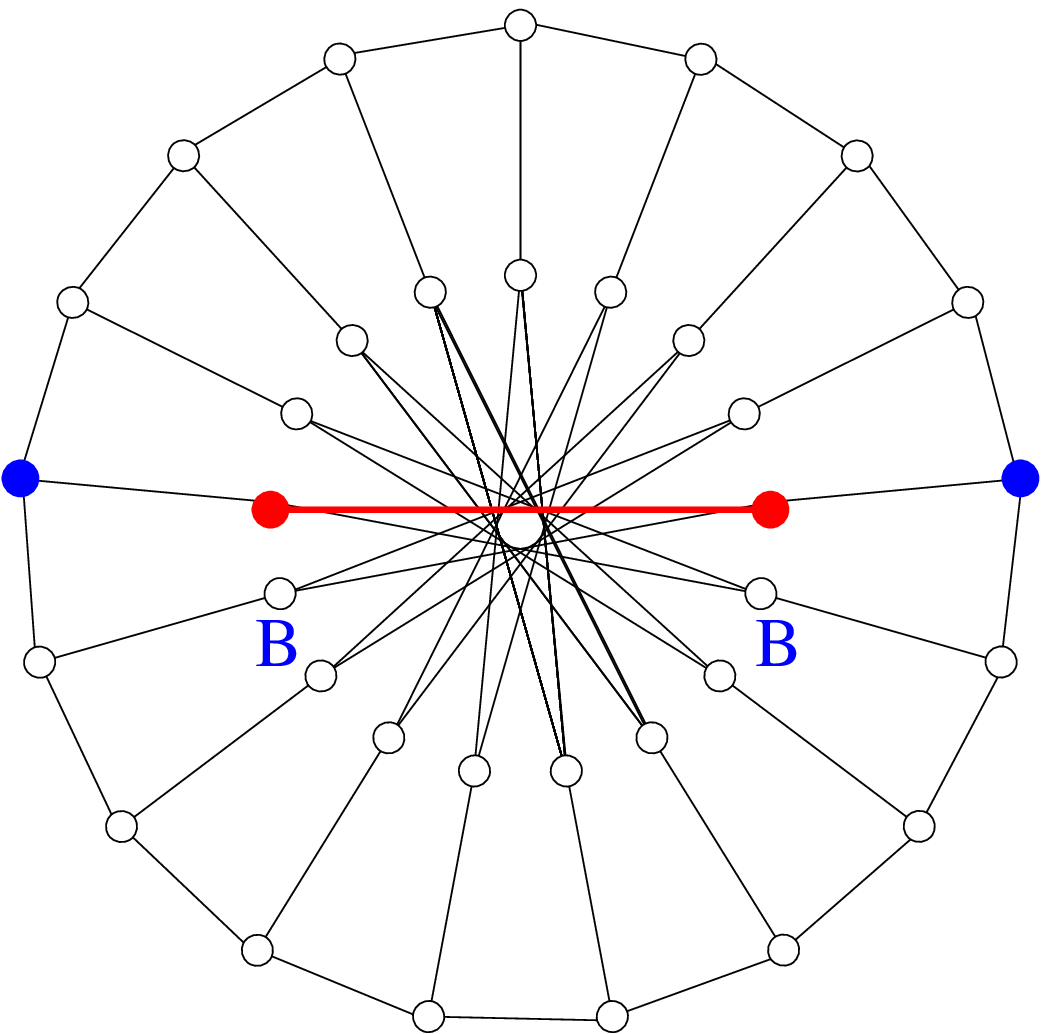}
  \caption{A horizontal cut.}
\label{horizontal}
\endminipage
\end{figure}

The Generalized Petersen graph $GP(2k+1,k)$ is defined for $k\ge 2$ as follows: 
the vertices are $\{u_1,\dots,u_{2k+1}\}$ and  $\{v_1,\dots,v_{2k+1}\}$ such that 
$\{u_1,\dots,u_{2k+1}\}$ form a cycle (the outer cycle) in the natural order. 
The spoke edges are of form $u_iv_i$ for $1\le i\le 2k+1$.
Finally, the inner cycle is spanned by the long diagonals. 
That is, edges of form $v_iv_{i+k}$, where the indices are modulo $2k+1$ and $1\le i\le 2k+1$.

One might define somewhat similar objects for an even number as follows: for $l\ge 3$ let $GP(2l,l-1)$ 
have vertices $\{u_1,\dots,u_{2l}\}$ and  $\{v_1,\dots,v_{2l}\}$ such that 
$\{u_1,\dots,u_{2l}\}$ form the outer cycle in the natural order. 
The spoke edges are of form $u_iv_i$ for $1\le i\le 2l$.
Finally, the inner 2-factor is spanned by the second longest diagonals. 
That is, edges of form $v_iv_{i+l-1}$, where the indices are modulo $2l$ and $1\le i\le 2l$.
However, these edges form a cycle only if $2l$ and $l-1$ are coprime. 
Therefore, $l$ needs to be even.
Hence we use $l=2k$ in the even part of the next section.

\section{Generalized Petersen graphs}

For the odd case, we can prove the following 

\begin{theorem}
 For any $k\ge 2$, {\rm Conjecture}~$\ref{r&b}$ holds for the Generalized Petersen graph $GP(2k+1,k)$.
 That is, there exists a red-blue vertex coloring such that the induced blue components are vertices or edges and the
 red components are stars with $1,2$ or $3$ edges.
\end{theorem}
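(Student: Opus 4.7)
The plan is to exhibit an explicit red-blue colouring for every $k\ge 2$ and verify the star and matching conditions by inspection. The smallest cases can be dispatched by hand: $k=2$ is the Petersen graph, handled in Figure~\ref{petersen}, and $k=3$ can be checked directly — one valid colouring of $GP(7,3)$ uses the red stars $\{u_1,u_2,v_1\}$, $\{u_4,u_5\}$, $\{v_3,v_6,v_7\}$, the blue edges $u_6u_7$ and $v_2v_5$, and the blue isolated vertices $u_3,v_4$.

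For general $k$ the starting point is the isomorphism $GP(2k+1,k)\cong GP(2k+1,2)$. This follows from the standard criterion that $GP(n,a)\cong GP(n,b)$ whenever $ab\equiv \pm 1 \pmod{n}$, combined with the identity $2k\equiv -1 \pmod{2k+1}$. Working inside $GP(n,2)$ with $n=2k+1$ odd makes the construction more manageable because the inner edges $v_iv_{i+2}$ skip only one index on the outer ordering, and so a periodic pattern on the outer cycle can be made compatible with the inner cycle. The plan is then to tile the index set $\{1,\ldots,n\}$ with a short periodic ``building block'' — for concreteness a block of length $3$ carrying a red $K_{1,2}$ of the form $\{u_{3i-1},u_{3i},v_{3i-1}\}$ together with a blue edge or an isolated blue vertex — and to insert a single boundary block of length $5$ or $7$, borrowed from the Petersen or $GP(7,2)$ colourings above, to absorb the residue $n\bmod 3$.

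The verification splits into a straightforward local check on the outer cycle and the spokes, and a more delicate check along the inner cycle. The second check requires that every inner edge $v_iv_{i+2}$ be either bichromatic or a legitimate red edge inside an existing star, and that no blue vertex acquire two blue neighbours. The main obstacle is the seam between the periodic region and the boundary block, where every inner edge crossing the seam must be verified individually and the boundary pattern must be tuned for compatibility; a small case analysis on $k\bmod 3$ (and possibly on parity) appears unavoidable. Should a single periodic template fail to accommodate every residue class, I would fall back on constructions tailored separately to each case, still using the Petersen and $GP(7,2)$ colourings as the atomic ingredients.
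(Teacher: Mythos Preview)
Your outline is reasonable and the isomorphism $GP(2k+1,k)\cong GP(2k+1,2)$ is correct, but what you have written is a strategy rather than a proof, and the one concrete ingredient you supply fails. In $GP(n,2)$ your period-$3$ block colours $u_{3i-1},u_{3i},v_{3i-1}$ red and leaves $u_{3i-2},v_{3i-2},v_{3i}$ blue; the spoke $u_{3i-2}v_{3i-2}$ together with the inner edge $v_{3i-2}v_{3i}$ then gives a blue $P_3$ inside a single block. More to the point, \emph{no} purely period-$3$ template can work even when $3\mid n$: a colouring of $GP(n,2)$ invariant under rotation by $3$ is the same data as a colouring of the quotient graph on the six residue classes, and that quotient is exactly the $3$-prism, which admits no colouring of the required kind (this is precisely the exception flagged after Conjecture~\ref{r&b}). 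So the boundary block cannot merely absorb a residue; it has to break the periodicity everywhere it is used, and you have not proposed one. Your last sentence already concedes that constructions ``tailored separately to each case'' may be needed, at which point nothing remains beyond the two small cases you checked.

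For comparison, the paper's argument is close in spirit --- it also assembles the colouring from a finite kit of gadgets and also splits into three cases (on $2k+1\bmod 6$, equivalent to your $n\bmod 3$ for odd $n$) --- but it works directly in $GP(2k+1,k)$, specifies four explicit building blocks (``horizontal cut'', ``red wedge'', ``blue cross'', ``red syringe''), and verifies the interface conditions between consecutive blocks. That interface verification, which you defer, is the entire content of the proof.
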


\begin{proof}
 We are going to construct an explicit coloring.
There are three cases according to the number of vertices on the outer cycle modulo 6.
 
We use the following building blocks: horizontal cut, red wedge, blue cross and red syringe, shown in 
Figures~\ref{horizontal}-\ref{syringe}.

A horizontal cut corresponds to vertices $u_1,u_k$ and $v_1,v_k$, where $u_1,u_k$ are blue and $v_1,v_k$ are red.

A red wedge corresponds to vertices $u_i,v_i,u_{i+k-1},u_{i+k},v_{i+k-1},v_{i+k}$ for any $i$, where
the vertices $u_i,u_{i+k-1},u_{i+k}$ are blue and  $v_i,v_{i+k-1},v_{i+k}$ are red.

A blue cross corresponds to vertices $(u_{i-1},u_i,u_{i+1})$, $(v_{i-1},v_i,v_{i+1})$, $(u_{i+k},u_{i+k+1})$, $(v_{i+k},v_{i+k+1})$ for any $i$,
where $(u_{i-1},u_i,u_{i+1},v_i),(u_{i+k},u_{i+k+1})$ are red and $(v_{i-1},v_{i+k}),(v_{i+1},v_{i+k+1})$ are blue.
 
A red syringe corresponds to vertices $u_i,v_i,u_{i+k-1},u_{i+k},v_{i+k-1},v_{i+k}$ for any $i$, where
the vertices $u_i,v_i,u_{i+k-1},u_{i+k}$ are red and $v_{i+k-1},v_{i+k}$ are blue.
 
 \begin{figure}[h]
\minipage{0.33\textwidth}
\includegraphics[width=\linewidth]{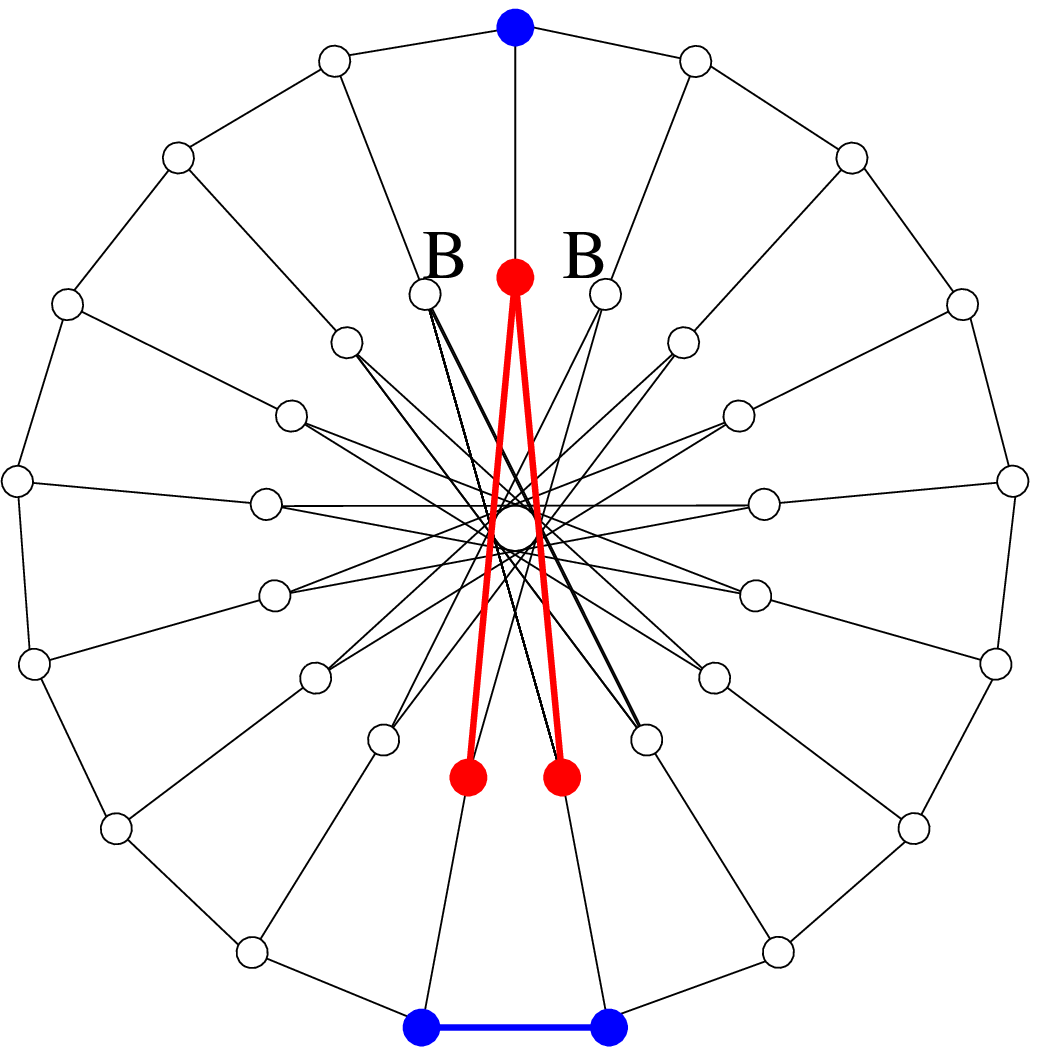}
  \caption{A red wedge.}
\label{redwedge}
\endminipage\hfill
\minipage{0.33\textwidth}
  \includegraphics[width=\linewidth]{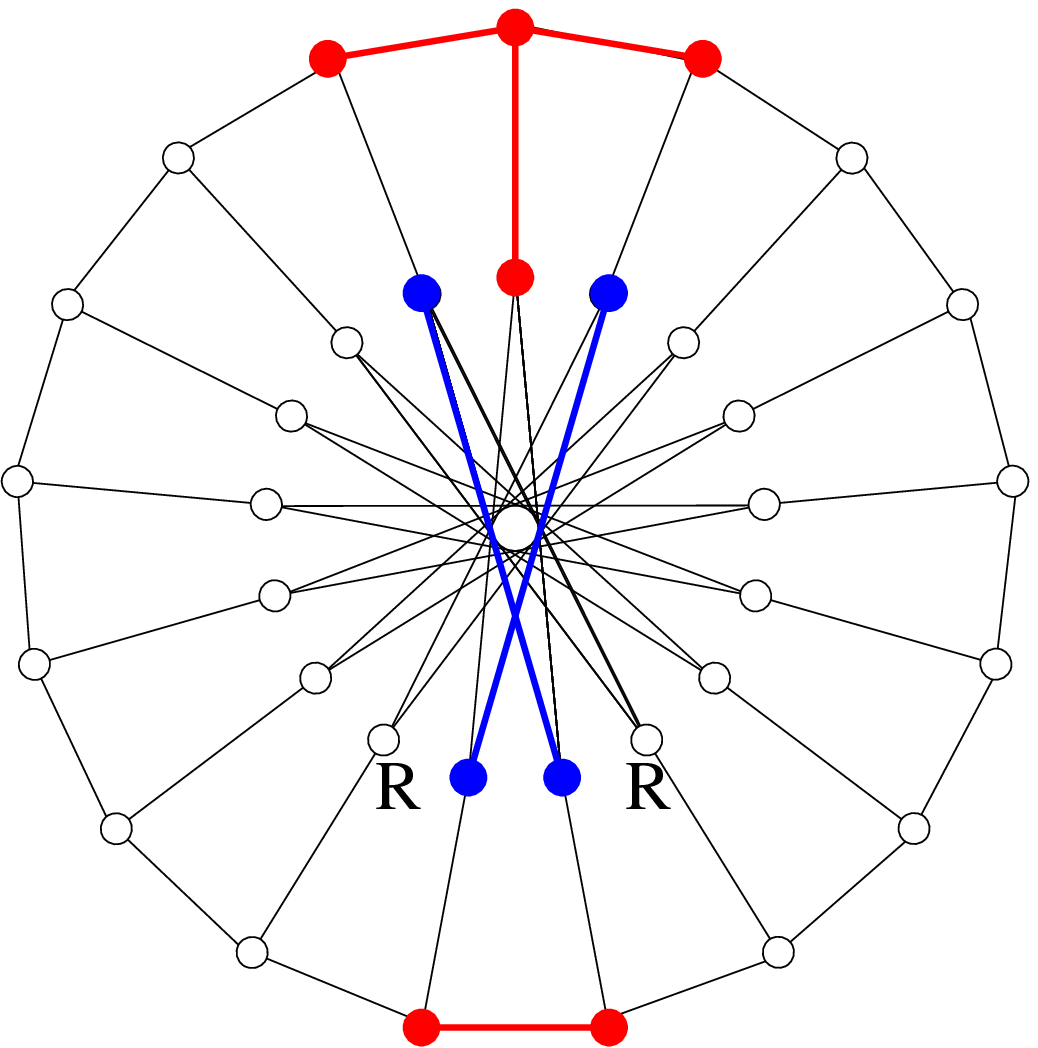}
  \caption{A blue cross.}
\label{bluecross}
\endminipage\hfill
\minipage{0.33\textwidth}
  \includegraphics[width=\linewidth]{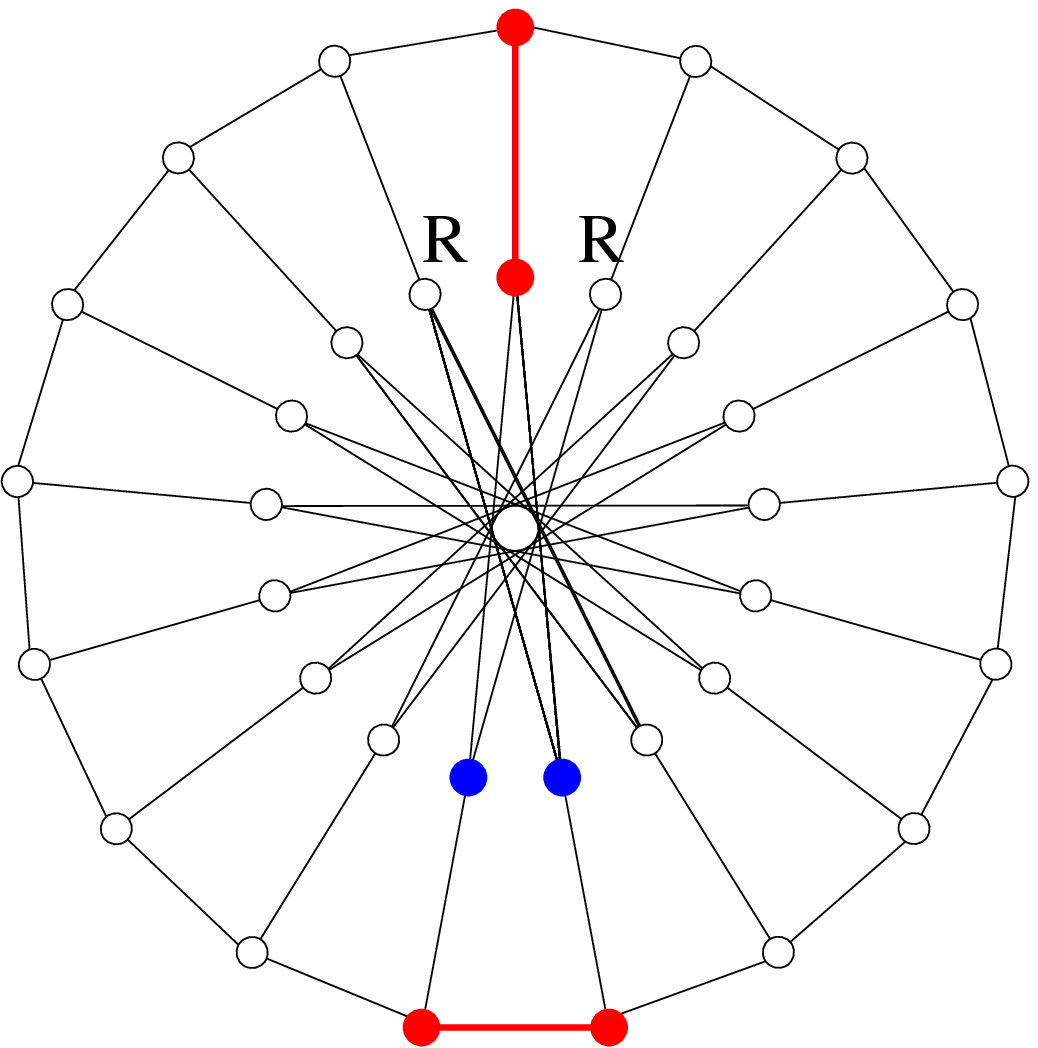}
  \caption{A red syringe.}
\label{syringe}
\endminipage
\end{figure}

 Each of these colored vertex sets cut the outer cycle into two disjoint paths $P$ and $Q$ and  
 divide the Generalized Petersen graphs into two halves in the figures: 
 the vertices of $P$ and their neighbors form one half, the vertices of $Q$ and their neighbors the other half. 
 Apart from the colored vertices, a building block might impose a side condition.
 That reflects the imposed color of some vertices in the inner cycle, 
 see the vertices marked by R or B in Figures~\ref{horizontal}-\ref{syringe}.
 It means that our construction will obey the imposed colors.
 (Many of the colors are necessary by the definition of the red-blue coloring.)
 We describe our construction by listing the colors of the vertices on the outer cycle.
 The horizontal cut is used first and precisely once in each case.
 After that, we are given an upper and a lower subgraph. 
 We list the color sequence on the outer cycle of the upper subgraph in the clockwise order, 
 then the color sequence on the outer cycle of the lower subgraph in the clockwise order.
 The two sequences are separated by a comma.
 For instance, 
 {\color{red}R}{\color{blue}B}{\color{blue}B}{\color{red}R}{\color{red}R}{\color{blue}B}{\color{blue}B}{\color{red}R}{\color{red}R}{\color{blue}B}{\color{blue}B}{\color{red}R},{\color{red}R}{\color{red}R}{\color{blue}B}{\color{red}R}{\color{red}R}{\color{red}R}{\color{blue}B}{\color{red}R}{\color{red}R}{\color{red}R}{\color{blue}B}{\color{red}R}{\color{red}R}, see Figure~\ref{color_GP27}.
 One decodes this as follows: syringe, wedge, cross, wedge, cross, wedge, syringe. 
 In each decoding step, recognising a building block, we consider the next monochromatic sequence in the color sequence (from left to right) before and after the comma.
The horizontal cut is not present in the color sequence.

In the inductive steps below, we will assume the existence of a coloring of a Generalized Petersen graph, which is represented by
the coloring of the upper and lower path of the outer cycle, C,D say.
This shorthand is used for an appropiate series of {\color{red}R}s and {\color{blue}B}s such that a comma is used as explained above.
After that, we consider a one size larger Generalized Petersen graph $G$ and extend the coloring C,D by some appropriate building
blocks to get the required coloring of $G$.

\smallskip

\underline{Case 1.} Let the number of vertices on the outer cycle be $5+6s$. 
That is, we color the graphs $GP(5+6s,2+3s)$, where $s\ge 0$.
The initial sequence (when $s=0$) is {\color{red}R},{\color{red}R}{\color{red}R}, which corresponds to the Petersen graph, see Figure~\ref{petersen}.
Now assume that the coloring C,D of  $GP(5+6s,2+3s)$ is given. 
We extend this coloring by adding a syringe and a wedge (such that the number of vertices increases by 3 both in the upper and the lower part) right after the horizontal cut.
That is, {\color{red}R}{\color{blue}B}{\color{blue}B}C,{\color{red}R}{\color{red}R}{\color{blue}B}D is the coloring of $GP(5+6(s+1),2+3(s+1))$.

\smallskip

\underline{Case 2.} Let the number of vertices on the outer cycle be $7+6s$.
That is, we color the graphs $GP(7+6s,3+3s)$, where $s\ge 0$.
The initial sequence (when $s=0$) is {\color{red}R}{\color{red}R},{\color{red}R}{\color{red}R}{\color{red}R} corresponding to a blue cross.
Now assume that the coloring C,D of  $GP(7+6s,3+3s)$ is given. 
We extend this coloring by adding a syringe and a wedge right after the horizontal cut.
That is, {\color{red}R}{\color{blue}B}{\color{blue}B}C,{\color{red}R}{\color{red}R}{\color{blue}B}D is the coloring of $GP(7+6(s+1),3+3(s+1))$.

\smallskip

\underline{Case 3.} Let the number of vertices on the outer cycle be $15+6s$.
That is, we color the graphs $GP(9+6s,4+3s)$, where $s\ge 1$.
The initial sequence is {\color{red}R}{\color{red}R}{\color{blue}B}{\color{blue}B}{\color{red}R}{\color{red}R},{\color{red}R}{\color{red}R}{\color{red}R}{\color{blue}B}{\color{red}R}{\color{red}R}{\color{red}R} 
corresponding to cross, wedge, cross.
Now assume that the coloring {\color{red}R}{\color{red}R}C,{\color{red}R}{\color{red}R}{\color{red}R}D of  $GP(9+6s,4+3s)$ is given. 
We extend this coloring by adding a wedge and a syringe after the first cross.
That is, {\color{red}R}{\color{red}R}{\color{blue}B}{\color{blue}B}{\color{red}R}C,{\color{red}R}{\color{red}R}{\color{red}R}{\color{blue}B}{\color{red}R{\color{red}R}}D is the coloring of $GP(9+6(s+1),4+3(s+1))$.

The exceptional case $GP(9,4)$ can be done as depicted in Figure~\ref{color_GP9}.

\begin{figure}[!h]
\minipage{0.39\textwidth}
  \includegraphics[width=\linewidth]{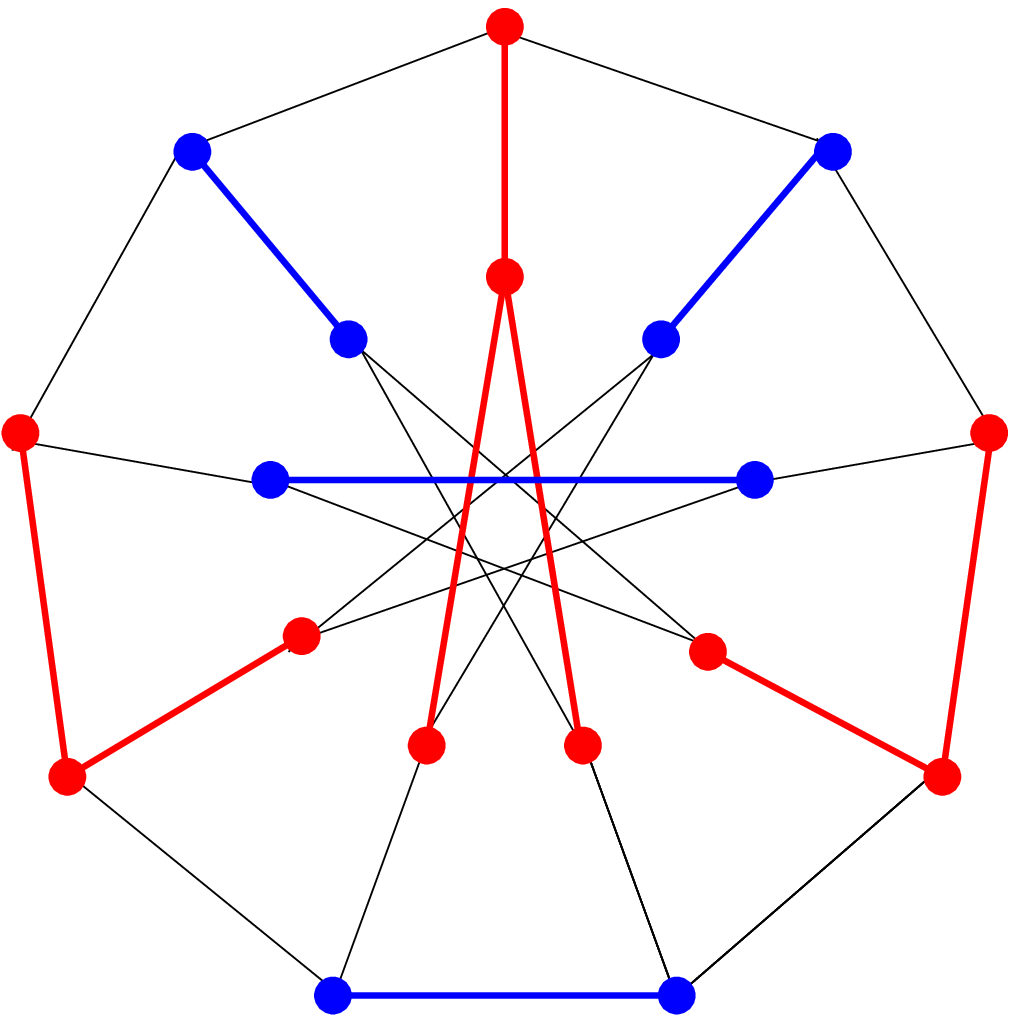}
  \caption{A coloring of $GP(9,4)$ satisfying the conditions of Conjecture~\ref{r&b}.}
\label{color_GP9}
\endminipage\hfill
\minipage{0.59\textwidth}
  \includegraphics[width=\linewidth]{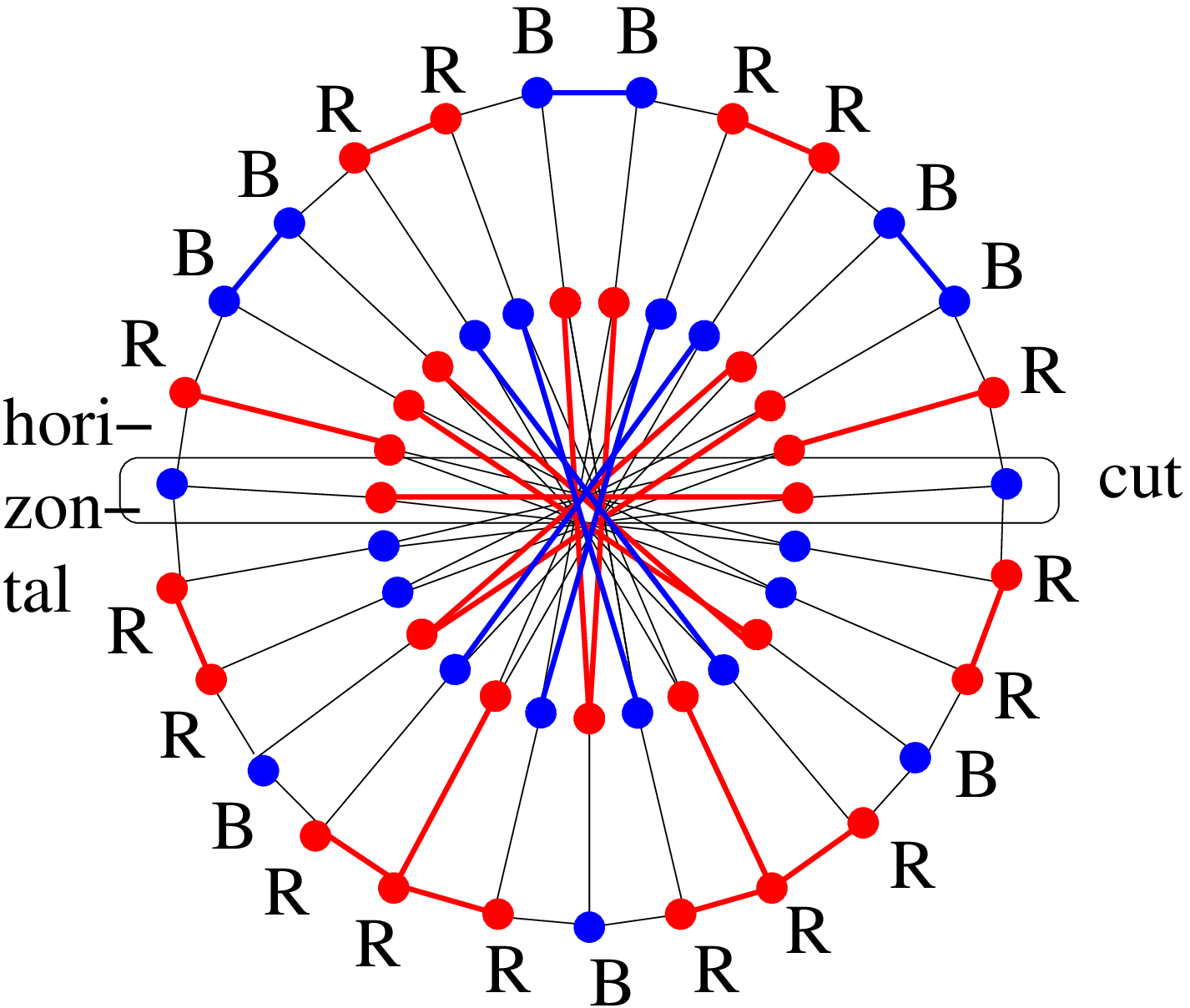}
  \caption{A coloring of $GP(27,13)$ satisfying the conditions of Conjecture~\ref{r&b}.}
\label{color_GP27}
\endminipage
\end{figure}

After giving the recipe in our construction, we only have to check that the side conditions are satisfied.
That means, the building blocks can be put together nicely after each other.

We always start and end the coloring sequence of the upper subgraph by a cross or a syringe. 
Therefore, the side condition of the horizontal cut is always satisfied, 
since a cross or a syringe have blue vertices on the inner cycle, where the side condition of the horizontal cut requires that.

The side condition of a wedge is always satisfied, since we can only put cross or syringe next to a wedge.
However, both a cross and an appropriately turned syringe contains blue vertices on the inner cycle, where the side condition
of the wedge requires that.

Reversing the argument of the previous paragraph, we can similarly confirm that the side condition of a syringe or a cross is always satisfied.
Indeed, we only put a wedge next to a syringe or a cross, and a wedge has only red vertices on the inner cycle.
\end{proof}

For the even case, we prove the following

\begin{theorem}
 For any $k\ge 2$, {\rm Conjecture}~$\ref{r&b}$ holds for the Generalized Petersen graph $GP(4k,2k-1)$.
 That is, there exists a red-blue vertex coloring such that the induced blue components are vertices or edges and the
 red components are stars with $1,2$ or $3$ edges.
\end{theorem}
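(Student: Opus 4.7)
The strategy is to mirror the odd-case proof, adapting the same four building blocks (horizontal cut, red wedge, blue cross, red syringe) to $GP(4k,2k-1)$. Each block is local: it involves only a constant number of consecutive outer vertices, their spokes, and a few imposed colors on inner vertices. Because in $GP(4k,2k-1)$ the inner diagonal still sends each $v_i$ to a near-opposite vertex $v_{i+2k-1}$, the picture around each block is combinatorially identical to the odd case, so the blocks and their side conditions can be re-used essentially verbatim.

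The inductive step has to advance $k$ differently than in the odd case. In the odd case each step added a syringe/wedge pair contributing $6$ outer vertices and increasing the diagonal offset by $3$. Here the offset is $2k-1$, which changes by $2$ per unit increase of $k$, so a single odd-style step is incompatible with the parity of $4k$. Instead, I would let the induction advance $k\to k+3$, inserting two syringe/wedge pairs per arc just after the horizontal cut; this adds twelve outer vertices in total and raises the diagonal offset by $6=2\cdot 3$, matching $GP(4(k+3),2(k+3)-1)$. Base cases $GP(8,3)$, $GP(12,5)$, $GP(16,7)$ for the three residue classes of $k\bmod 3$ are handled explicitly, analogously to $GP(9,4)$ in Figure~\ref{color_GP9}.

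To close the induction, one verifies that the side conditions hold at the splice and at the old interfaces (which are untouched away from the insertion). Since wedges are always adjacent to crosses or syringes, as in the odd case, the block-interface argument from the end of the previous proof carries over locally: a cross or syringe always presents blue inner-cycle vertices to an adjacent wedge, and reciprocally a wedge presents only red inner-cycle vertices to an adjacent cross or syringe.

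The main obstacle is confirming that the \emph{far} endpoint of each new spoke, reached via the diagonal of offset $2(k+3)-1$ in the extended graph, lands on an inner vertex colored as the side-condition convention requires. Because $2k-1$ is not symmetric about the midpoint of the outer cycle in the same way that $k$ is in $GP(2k+1,k)$, one cannot simply transcribe the odd-case indices; instead, a careful case analysis of which $v_j$ is reached by each newly inserted spoke is required. This verification, together with the explicit base cases, is expected to be routine but somewhat tedious.
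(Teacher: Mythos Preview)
Your plan diverges from the paper's proof, which does \emph{not} reuse the odd-case machinery at all. For $GP(4k,2k-1)$ the paper simply colours the outer cycle periodically $RRBB\,RRBB\cdots$ and gives the spoke partners the complementary colour. When $k$ is even, $4k$ is divisible by $8$ and this pattern closes up into a perfect red--blue induced matching. When $k$ is odd, the paper introduces a new ``red anchor'' block (a run $RRR$ on the outer cycle whose middle spoke is red and whose outer spokes are blue), exhibits an explicit base colouring of $GP(20,9)$ built from anchors and $BB$ blocks, and then extends by appending $RRBB$ to each half for every step $k\to k+2$; $GP(12,5)$ is handled separately. This is considerably lighter than re-deploying the four building blocks.

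Your approach has a concrete gap at the sentence ``the picture around each block is combinatorially identical to the odd case''. It is not. In $GP(2k+1,k)$ the two inner-cycle neighbours of $v_i$ are $v_{i+k}$ and $v_{i-k}=v_{i+k+1}$: their indices are \emph{consecutive}. In $GP(4k,2k-1)$ the inner neighbours of $v_i$ are $v_{i+2k-1}$ and $v_{i-(2k-1)}=v_{i+2k+1}$: their indices differ by~$2$. The side conditions of the wedge, cross and syringe are precisely statements about which inner-cycle vertex a block exposes to its neighbour, and those statements rely on this local adjacency pattern; with the gap-$2$ structure the exposed inner vertices no longer sit where the adjacent block expects them. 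So the blocks cannot be transplanted ``essentially verbatim'' and would have to be redesigned from scratch, which you have not done. Beyond this structural issue, the proposal explicitly defers both the three base cases and the splice verification as ``routine but somewhat tedious''; even on its own terms it is a programme rather than a proof.
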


\begin{proof}
 We consider two cases according to the parity of $k$. 
 It is easy to find the required coloring, when $k$ is even. 
 In that case, the number of vertices on the outer cycle is divisible by 8.
 Hence we can repeatedly color the vertices  {\color{red}R}{\color{red}R}{\color{blue}B}{\color{blue}B}\dots, so a colored matching is induced.
 Now consider two colored ``diagonally opposite'' edges on the outer cycle, $u_{1}u_{2}$ and $u_{2k+1}u_{2k+2}$ say.
 Suppose these vertices are red.
 Consider the vertices of the inner cycle next to these 4 vertices: $v_{1},v_{2}$ and $v_{2k+1},v_{2k+2}$.
 We color them blue and they induce a matching $v_1v_{2k+2}$ and $v_2v_{2k+1}$ on the inner cycle, since $(2k+2)+(2k-1)\equiv 1 \text{\ mod\ }4k$.
 We repeat this idea on the rest of the graph, adding 2 to all indices and interchanging the colors.
 This coloring satisfies the conditions.
 
  \begin{figure}[!h]
\begin{center}
  \includegraphics[width=0.4\linewidth]{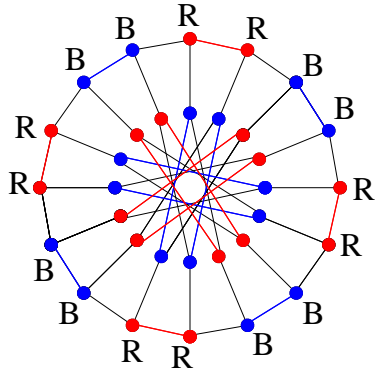}
  \caption{A coloring of $GP(16,7)$ satisfying Conjecture~\ref{r&b}.}
\label{even16_color}
\end{center}
\end{figure}
 
 When $k$ is odd, we construct the coloring by giving an initial coloring and then adding the colored matchings as in the 
 previous argument.
 Assume that $k\ge 5$.
 The initial graph is $GP(20,9)$.
 We use the coloring given in Figure~\ref{color_GP20}.
 As before, we can identify the coloring by listing the color sequence of the vertices on the outer cycle in clockwise order
 starting at $u_1$:
 {\color{red}R}{\color{red}R}{\color{red}R}{\color{blue}B}{\color{blue}B}{\color{red}R}{\color{red}R}{\color{red}R}{\color{blue}B}{\color{blue}B},{\color{red}R}{\color{red}R}{\color{red}R}{\color{blue}B}{\color{blue}B}{\color{red}R}{\color{red}R}{\color{red}R}{\color{blue}B}{\color{blue}B}.
 Here the decoding is the following: three red vertices correspond to a red anchor, as indicated in Figure~\ref{color_GP20}.
 For instance a red anchor represented by {\color{red}R}{\color{red}R}{\color{red}R} include the 4 vertices $u_1,u_2,u_3,v_2$ belonging to a red claw, and its two blue neighbors $v_1,v_3$ on the inner cycle.
 Here the magic works, since the edges of the inner cycle connect vertices of opposite colors.
 Two consecutive blue vertices {\color{blue}B}{\color{blue}B} on the outer cycle correspond to a blue edge, for instance $u_4,u_5$ and its two red neighbors $v_4,v_5$ on the inner cycle.

 \begin{figure}[!h]
\minipage{0.49\textwidth}
  \includegraphics[width=\linewidth]{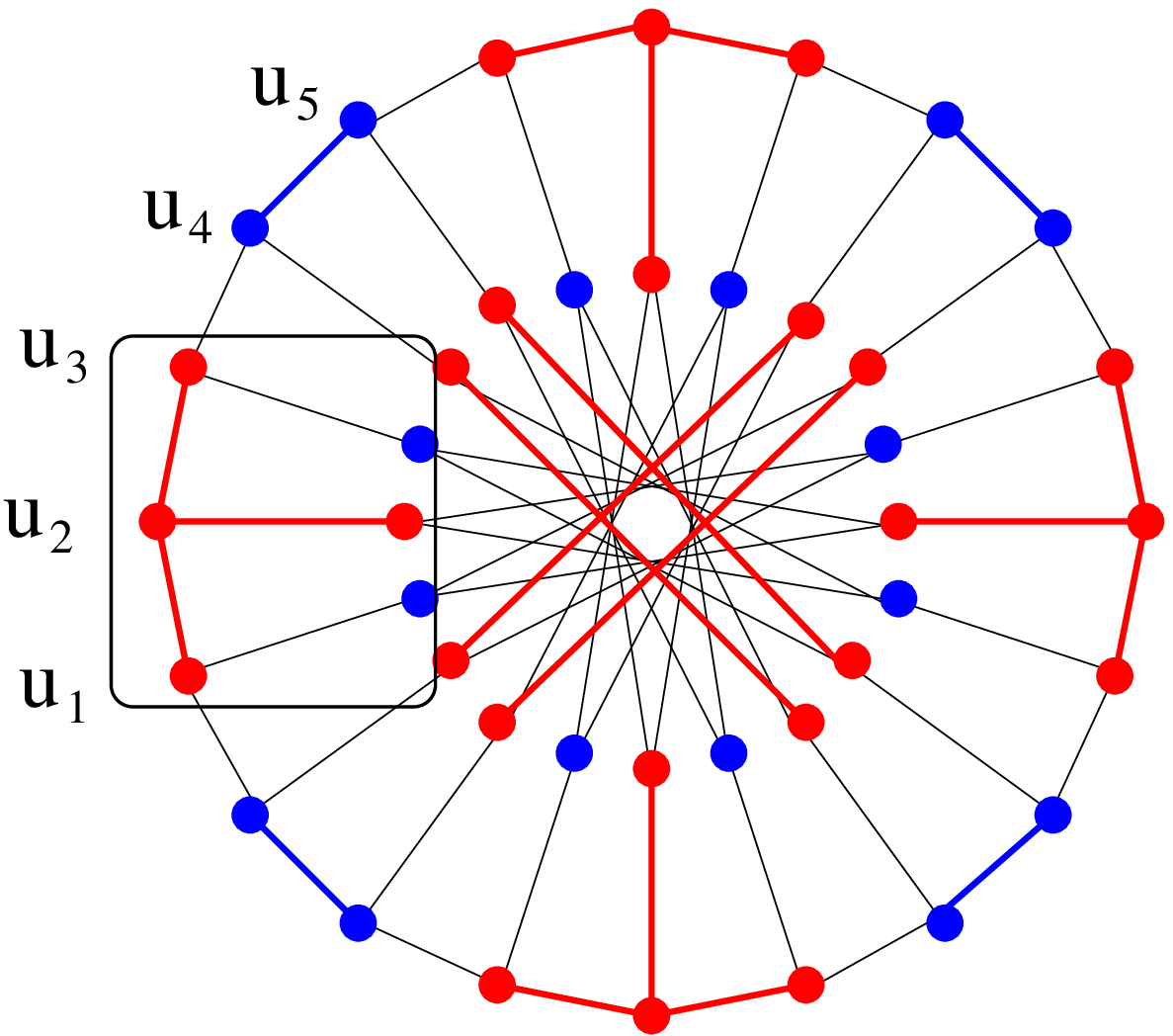}
  \caption{A coloring of $GP(20,9)$ satisfying Conjecture~\ref{r&b}.}
\label{color_GP20}
\endminipage\hfill
\minipage{0.49\textwidth}
  \includegraphics[width=\linewidth]{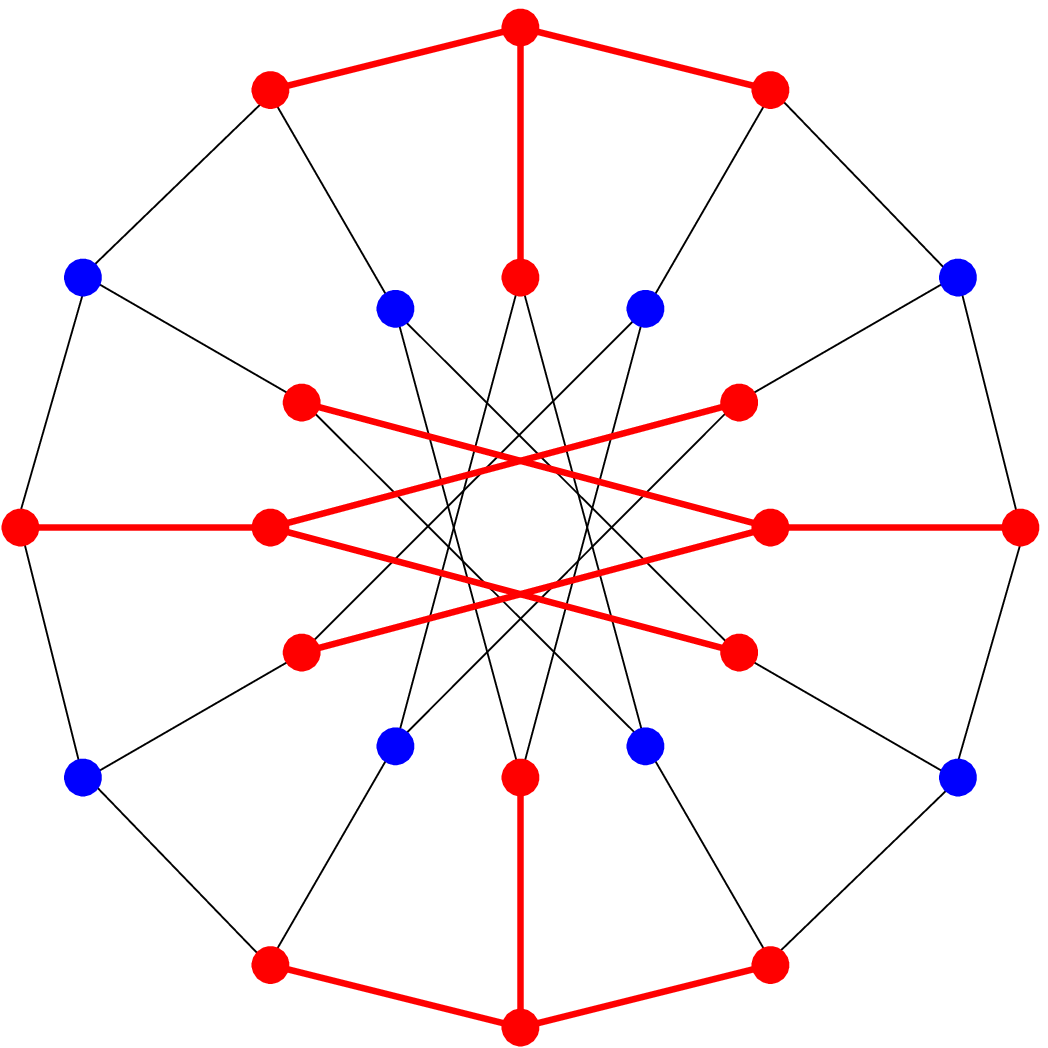}
  \caption{A coloring of $GP(12,5)$ satisfying Conjecture~\ref{r&b}.}
\label{color_GP12}
\endminipage
\end{figure}

When $k$ increases by two, the number of vertices on the outer cycle increases by 8.
Therefore, the following extension works:
We add two red vertices and two blue vertices at the end of the color sequence.
Formally, if C,C was a valid coloring of $GP(4k,2k-1)$, where $k$ is odd, then
C{\color{red}R}{\color{red}R}{\color{blue}B}{\color{blue}B},C{\color{red}R}{\color{red}R}{\color{blue}B}{\color{blue}B} is a valid coloring of $GP(4(k+2),2k+3)$.
This extension corresponds to inserting two colored edges on the outer cycle and two pairs of colored edges of the inner cycle. 
This is the same principle that we used in the first paragraph of the proof for the even case.

We excluded the case $k=3$ from the previous argument.
To complete the proof, Figure~\ref{color_GP12} shows the required coloring of $GP(12,5)$.
\end{proof}

\section{Subcubic trees}

We have not found a counterexample to Conjecture~\ref{r&b} among the subcubic graphs (of any connectivity).
Therefore, we pose the following slight strengthening of Conjecture~\ref{r&b}:

\begin{conj}
 Every subcubic graph on at least $7$ vertices posseses a red-blue vertex coloring such that the induced blue components are vertices or edges and
 every red component has minimum degree at least $1$ and contains no $3$-edge path.
\end{conj}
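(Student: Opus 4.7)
The plan is a strong induction on $|V(G)|$, funnelling every subcubic graph $G$ to one of two endpoints that I treat as black boxes: $3$-connected cubic graphs on at least $8$ vertices (handled by Conjecture~\ref{r&b}) and subcubic trees (the case the rest of this paper establishes). The base of the induction is a direct verification of the conjecture for all subcubic graphs on $n$ vertices with $7\le n\le N$, where $N$ is a small constant chosen large enough to absorb the finite list of small cases that the reductions below cannot touch; this base is in principle enumerable by computer.

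For the inductive step I would stratify $G$ by minimum degree and connectivity. If $G$ contains a vertex $v$ of degree at most $2$, I delete $v$ (suppressing it into an edge between its two neighbours when $\deg v=2$, unless this would create a multi-edge), apply induction to the smaller subcubic graph $G'$, and extend the colouring back to $v$. If $G$ is cubic and contains a bridge, a $2$-edge cut, or a $2$-vertex cut, I split $G$ across the cut into two smaller subcubic pieces $G_1,G_2$ in the standard way, apply induction to each, and glue the colourings. What survives all of this is a $3$-connected cubic graph on at least $8$ vertices, which is exactly Conjecture~\ref{r&b}.

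The main obstacle will be the lifting and gluing steps. A colouring of $G'$ produced blindly by induction may force the boundary vertices into a pattern that is incompatible with what $G$ requires: a suppressed degree-$2$ vertex may find both of its surviving neighbours already blue and saturated in blue edges, so no colour works for it; a $2$-cut may force the two pieces to present mismatched boundary colours, or to create a forbidden $4$-path across the cut. The standard remedy, which I expect to be essential here, is to strengthen the inductive hypothesis to a rooted version: for every subcubic graph on at least $7$ vertices and every prescribed partial colouring of a specified boundary set of size at most $3$, drawn from a bounded list of admissible patterns, there is a valid red-blue colouring of the whole graph extending the prescription. Maintaining such a rooted statement through every reduction, in particular through the cubic cut decompositions, is almost always the laborious core of decomposition arguments of this kind, and I expect it to be the main technical challenge here; the fact that the paper only records this as a conjecture, with no counterexample yet found, is consistent with the difficulty one would encounter in pushing the rooted induction through all boundary patterns.
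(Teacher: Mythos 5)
The statement you are addressing is recorded in the paper only as a conjecture; the paper confirms it for subcubic trees and confirms Conjecture~\ref{r&b} only for Generalized Petersen graphs, so there is no proof on record to compare yours against. More importantly, your proposal is not a proof either, for two independent reasons. First, one of your two ``black boxes'' is Conjecture~\ref{r&b} itself, which is open: the paper verifies it only for the graphs $GP(2k+1,k)$ and $GP(4k,2k-1)$. So even if every reduction you describe went through, you would have reduced one open conjecture to another open conjecture, not established the statement.

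Second, the reductions themselves founder on the fact that the required property is neither hereditary nor monotone; the paper explicitly remarks that ``taking subgraphs does not keep the properties required by the theorem,'' and the example of $K_4$ with three subdivided edges versus the same graph with one extra edge shows that both removing and adding material can change colorability in either direction. Consequently, deleting or suppressing a degree-$2$ vertex, or splitting along a bridge or a $2$-cut, yields a smaller graph whose colourings need not extend to $G$: the surviving neighbours of a suppressed vertex may both be blue and already matched, and a boundary vertex may lose its only red neighbour, exactly the failure modes you anticipate. You propose to repair this with a rooted strengthening over ``a bounded list of admissible boundary patterns,'' but you never exhibit that list, never verify that the rooted statement survives even one gluing configuration, and never check that the two endpoint classes satisfy the rooted (rather than the plain) statement --- the tree theorem in the paper is proved unrooted, by a breadth-first colouring with local recolourings, and gives you no control over prescribed boundary colours. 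Since that case analysis is the entire content of any argument of this shape, what you have is a plausible plan of attack rather than a proof, and the statement remains a conjecture.
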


We confirm the statement for trees.

\begin{theorem}
 Every subcubic tree $T$ posseses a red-blue vertex coloring such that the induced blue components are vertices or edges and the
 red components are stars with $1,2$ or $3$ edges.
\end{theorem}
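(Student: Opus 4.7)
The plan is strong induction on $n = |V(T)|$. For $n \le 4$, the claim is verified by direct inspection of the few subcubic trees ($K_1$, $K_2$, $P_3$, $P_4$, $K_{1,3}$), each of which admits an obvious valid coloring. For the inductive step I would peel off a pendant substructure and recurse on the remainder.

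To locate such a pendant, I use the identity $L = D_3 + 2$ for any subcubic tree (where $L$ denotes the number of leaves and $D_3$ the number of degree-$3$ vertices), which follows from the handshake lemma. If every degree-$3$ vertex were adjacent to at most one leaf and every degree-$2$ vertex were adjacent to no leaf, then leaves would attach only to degree-$3$ vertices, yielding $L \le D_3$ and contradicting the identity. Consequently $T$ contains either (A) a degree-$3$ vertex $v$ with two leaf neighbors $\ell_1, \ell_2$ and a third neighbor $w$, or (B) a leaf $\ell$ whose unique neighbor $u$ has degree $2$, with $u$'s other neighbor some vertex $w$. In case (A), set $T' := T - \{v, \ell_1, \ell_2\}$; in case (B), set $T' := T - \{u, \ell\}$. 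Applying the inductive hypothesis to $T'$ gives a valid coloring, which I would extend to $T$ by coloring the removed vertices according to the color and ``role'' of $w$: if $w$ is blue, the pendant is colored entirely red to form a new red star centered at $v$ (respectively $u$); if $w$ is the red center of a star with at most two leaves, $v$ (resp. $u$) is added as a new red leaf and the remaining pendant vertices are colored blue; and if $w$ is a red leaf of a $K_{1,1}$ star, the same coloring promotes $w$ to the center of a $K_{1,2}$.

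The principal obstacle is the residual case in which the inductive coloring of $T'$ makes $w$ a red leaf of a red star with at least two leaves: then coloring the pendant red creates a forbidden red $P_4$, while any other coloring of the pendant leaves at least one red vertex with no red neighbor. To overcome this I would strengthen the inductive hypothesis, requiring that for any specified vertex $w$ in $T'$ one can find a valid coloring in which $w$ is \emph{not} such a ``full'' red leaf. This strengthened lemma is proved by the same peeling scheme, but whenever the natural extension places $w$ in the bad state, one performs a local recoloring near $w$ — for example, swapping $w$ with its star-center, or recoloring $w$ blue and reassigning the star's other leaves — exploiting the fact that $w$ has at most three neighbors, so only finitely many local configurations need be resolved explicitly.
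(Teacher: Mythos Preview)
Your overall strategy---peel off a small pendant, recurse, and re-attach---is sound and genuinely different from the paper's proof, which instead runs a breadth-first search from a root and colors each new vertex according to an explicit case analysis on the color and current degree of its parent, occasionally recoloring a bounded neighbourhood. Your structural lemma (configuration (A) or (B) always exists) and the three easy extension cases are all correct.

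The gap is in your resolution of the obstacle. The strengthened hypothesis ``for every specified $w$ there is a valid coloring in which $w$ is not a full red leaf'' does not close under your own peeling scheme. To prove it for a pair $(T,w_0)$ you peel to $T'$ with attachment vertex $w$; you then need $w$ to avoid the bad state in $T'$ (so that the extension goes through) \emph{and} $w_0$ to avoid the bad state in the final coloring of $T$. When $w_0\in T'\setminus\{w\}$, invoking the hypothesis on $(T',w)$ gives no control over $w_0$, while invoking it on $(T',w_0)$ gives no control over $w$; you cannot do both with a one-vertex hypothesis. Your fallback---a local recoloring near $w_0$---is asserted but not carried out, and it is not innocuous: if $w_0$ has two blue neighbours, switching $w_0$ to blue creates a blue $P_3$; repairing that forces one of those neighbours to become red, which may in turn create a red $P_4$ or a new isolated red vertex, and so on. Bounded degree does guarantee only finitely many local pictures, but you must actually enumerate them and verify that the repair terminates without disturbing the pendant you just attached (which can be adjacent to $w_0$'s neighbourhood). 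As written, the hard case is precisely the one left undone; either carry out that finite case analysis explicitly, or strengthen the hypothesis further (for example, guarantee two distinct ``extendable'' states at any designated vertex of degree at most~$2$) so that the induction genuinely closes.
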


\begin{proof}
Let $r$ be an arbitrary vertex of $T$.
Let us draw $T$ as a planar tree rooted at $r$.
The planar embedding allows us to distinguish the left and right son of a vertex. 
The level of $r$ is $0$ and the neighbors of $r$ lie in level $1$ etc.
We color the vertices of $T$ according to a breadth-first search.
In each step, we color a new vertex $v$, that has degree 1 in the current colored subtree.
We denote the only neighbor of $v$ by $x$.
In certain cases, we recolor a few previously colored vertices in a small neighborhood of $v$.
Any other vertex keeps its color.
Our coloring algorithm works according to the rules listed below.
Each rule describes a step, when we transform a colored subtree $T_i$ to $T_{i+1}$ adding the next vertex of the breadth-first search.
At the end of each step, we have a coloring of $T_{i+1}$ that satisfies the conditions of the theorem.

Case 0.
We color $r$ blue.

Case 1. 
Assume the degree of $x$ is at most 1 in $T_i$ and $x$ is a singleton blue.
Now we color $v$ blue.

Case 2.
Assume the degree of $x$ is 1 in $T_i$ and $x$ is red.
Now we color $v$ blue.

Case 3.
Assume the degree of $x$ is 1 in $T_i$ and $x$ is blue and its parent is also blue.
Now we color $v$ red and change the color of $x$ to red.

Case 4.
Assume the degree of $x$ is 2 in $T_i$ and $x$ is red.
Now we set $v$ to be blue.

Case 5.
Assume the degree of $x$ is 2 in $T_i$ and $x$ is blue.
By the conditions, the left son $l$ of $x$ must be blue and the parent $p$ of $x$ must be red in $T_{i}$.
Let $g$ denote the parent of $p$ in $T_i$, if it exists.

Case 5a.
Assume the degree of $p$ is 2 in $T_i$.
Now we set $l,v,x$ to be red and $p$ to be blue.
This works, unless now $g$ would be a singleton red in $T_{i+1}$.
In this exceptional case, we keep the colors of the vertices in $T_i$ and only change the color of $x$ to red and color $v$ blue.
If $g$ does not exist, then $p$ is the root, and $p$ has another son $x'$.
Now $x'$ plays the role of $g$ in the previous argument and we make the same recoloring if necessary.

Case 5b.
Assume the degree of $p$ is 3 in $T_i$, $x$ is the right son of $p$ and the left son of $p$ is blue.
It implies that the parent $g$ of $p$ is also red.
Now we set $l,v,x$ to be red and color $p$ blue.
This yields a coloring satisfying the required conditions, unless now $g$ is a singleton red.
In this exceptional case, we do a different recoloring of $T_i$. 
We change $x$ to red and color $v$ blue.

Assume the degree of $p$ is 3 in $T_i$, $x$ is the right son of $p$ and the left son of $p$ was red in $T_i$.
Now we set $x$ to red and $l,v$ to blue.
Here we notice that the recoloring of $x$ might create a red $3$-star, but not a red path with 3 edges. 

Case 5c. 
Assume the degree of $p$ is 3 in $T_i$, $x$ is the left son of $p$ and the right son of $p$ is blue.
It implies that the parent $g$ of $p$ is also red.
Now we set $l,v,x$ to be red and color $p$ blue.
This yields a coloring satisfying the required conditions, unless now $g$ is a singleton red.
In this exceptional case, we do a different recoloring of $T_i$. 
We change $x$ to red and color $v$ blue.

Assume the degree of $p$ is 3 in $T_i$, $x$ is the left son of $p$ and the right son of $p$ is red.
Now we set $x$ to red and $l,v$ to blue.

Since we covered all cases, the algorithm terminates with a coloring of $T$ satisfying the conditions of the theorem.
\end{proof}

We remark that in many cases, the following simple idea works: 
we repeatedly color one level of vertices blue and the next two levels red.
The failure of this coloring happens at red leaves, if their parent's color is blue.
However, taking subgraphs does not keep the properties required by the theorem.
Therefore, it seems difficult to make this simple idea into a full proof.

\section*{Discussion}

Since each monochromatic connected subgraph is very small in Conjecture~\ref{r&b}, 
we might call such colorings {\em crumby}.
Conjecture~\ref{r&b} concerns crumby colorings of 3-connected cubic graphs. 
Every such graph posseses a perfect matching by Petersen's Theorem.
Naturally, one may ask whether these graphs admit a red-blue induced matching, that is, 
we color the vertices red-blue and each color class induces a matching.
This is clearly a special case of a crumby coloring.
There are two non-isomorphic graphs arising from $K_{3,3}$ by adding a handle.
One of them is the Wagner graph, which posseses a red-blue induced matching.
Let the other graph be $H$. We claim that $H$ does not have a red-blue matching.
Actually, $H$ has a unique perfect matching (up to symmetry), as shown in Figure~\ref{handle_color}.
However, the four matching edges are pairwise connected by an edge, therefore cannot be colored red-blue.
Notice that $H$ do have a crumby coloring, see Figure~\ref{handle_color}.

 \begin{figure}[!h]
 \begin{center}
  \includegraphics[width=0.7\linewidth]{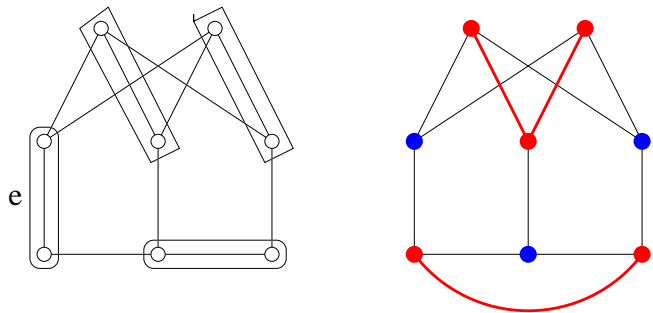}
  \caption{The matching on the left and a crumby coloring on the right.}
\label{handle_color}
 \end{center}
 \end{figure}

There are also infinitely many bipartite examples without a red-blue induced matching.
Let $G$ be a 2-connected bipartite cubic graph.
It contains a perfect matching $M$.
Suppose that $G$ also posseses a red-blue induced perfect matching.
Let $G(R,B)$ denote the following auxiliary bipartite multigraph: the vertices are the red matching edges in one class and the blue matching edges in the other class.
The edges are inherited from $G$.
That is, $(X,Y)$ is an edge in $G(R,B)$ if and only if $X$ is a red matching edge, $Y$ is a blue matching edge and 
there are vertices $x\in X$ and $y\in Y$ such that $(x,y)\in E(G)$.
Since $G$ was cubic, $G(R,B)$ is $4$-regular. Therefore, the number of red matching edges must equal the number of blue matching edges.
This yields the number of vertices in $G$ must be divisible by $4$.
However, there are infinitely many 3-connected, bipartite, cubic graphs on $4k+2$ vertices.
These graphs cannot have a red-blue induced matching by the previous argument.

Is this necessary condition also sufficient?

\medskip

\noindent {\it
  Does every $3$-connected, bipartite, cubic graph on $4k$ vertices posseses a red-blue induced perfect matching? 
 }

\medskip
 
 The answer to this question is negative for small values of $k$.
 Already the graph in Figure~\ref{handle_color} is a counterexample.
 There are five connected, cubic, bipartite graphs on 12 vertices.
 One of them is 2-connected, but not 3-connected.
 Among the 3-connected ones, there is 1 graph $G$ without a red-blue induced perfect matching.
 The reason is the following: $G$ contains $K_{2,3}$ as a subgraph. 
When we color these 5 vertices, at least 3 of them receive the same color. 
They either induce a structure larger than a matching edge, or they isolate a vertex of the other color.
%
%
 
  \begin{figure}[!h]
 \begin{center}
  \includegraphics[width=0.2\linewidth]{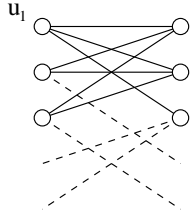}
  \caption{Each three possible pairing of $u_1$ leads to a contradiction.}
\label{gadget}
 \end{center}
 \end{figure}

 There are more examples on 16 vertices without a red-blue induced perfect matching. 
 However, there seems to be no local reason. 
 
 
 \begin{problem}
  Can we characterize the $2$-edge-connected cubic graphs, which posses a red-blue induced perfect matching? 
 \end{problem}

One might be tempted to stretch to the limits of Conjecture~\ref{r&b}. 
In our solutions, we used the red $3$-star.
Can this be avoided?

\begin{problem}
  Are there infinitely many subcubic graphs, which do not posses a red-blue coloring such that each blue component has $1$ or $2$ vertices and 
  every red component has $2$ or $3$ vertices? 
 \end{problem}
 


The following generalization of Conjecture~\ref{r&b} seems plausible.

\medskip

\noindent {\it Every graph $G$ with maximum degree $\Delta\le k$ has a red-blue coloring such that every blue component is either an isolated vertex or an edge and
 there is neither a red isolated vertex nor a red path with $k$ edges.} 
 
\medskip
 
However, the toroidal grids $C_5\times C_5$ and $C_5\times C_7$ do not posses the required red-blue coloring.
Probably the same holds for any $C_k\times C_l$ for odd numbers $k$ and $l$.

Therefore, it remains open whether the conjectured phenomenon in Conjecture~\ref{r&b} generalizes to higher degrees.

\end{document}